\newcommand{\mat}{\boldsymbol}
\renewcommand{\vec}{\boldsymbol}
\newtheorem{lemma}{Lemma}
\begin{document}

\title{A Note on  Archetypal Analysis and the Approximation of Convex Hulls}

\author{%
\IEEEauthorblockN{Christian Bauckhage}
\IEEEauthorblockA{B-IT, University of Bonn, Germany \\
    Fraunhofer IAIS, Sankt Augustin, Germany}}

\maketitle

\begin{abstract}
We briefly review the basic ideas behind archetypal analysis for matrix factorization and discuss its behavior in approximating the convex hull of a data sample. We then ask how good such approximations can be and consider different cases. Understanding archetypal analysis as the problem of computing a convexity constrained low-rank approximation of the identity matrix provides estimates for archetypal analysis and the SiVM heuristic.
\end{abstract}

\section{Introduction}

Archetypal analysis (AA) is a matrix factorization technique due to Cutler and Breiman \cite{Cutler1994-AA}. Although not as widely applied  as other methods, AA is a powerful tool for data mining, pattern recognition, or classification as it provides significant \textit{and} easily interpretable results in latent component analysis, dimensionality reduction, and clustering. Robust and efficient algorithms have become available \cite{Bauckhage2009-MAA,Eugster2011-WAR,Morup2012-AAF,Seth2014-PAA,Thurau2010-YWC,Thurau2011-CNN} and examples for practical applications are found in physics \cite{Stone1996-AAOSTD,Stone2002-EAD,Chan2003-AAOGS}, genetics and phytomedicine \cite{Huggins2007-TTH,Roemer2012-EDS,Thogersen2013-AAO}, market research and marketing \cite{Li2003-AAA,Bauckhage2014-KAA,Sifa2014-AGR}, performance evaluation \cite{Porzio2006-AAF,Eugster2012-PAO,Seiler2013-AS}, behavior analysis \cite{Thurau2011-IAA,Drachen2012-GSA,Sifa2013-AM}, and computer vision \cite{Marinetti2005-MFM,Thurau2009-AII,Cheema2011-ARB,Prabhakaran2012-AMS,Asbach2013-UMS,Xiong2013-FRV}. 

Here, we briefly review basic characteristics and geometric properties of AA and approach the problem of bounding its accuracy. We revisit the SiVM heuristic \cite{Thurau2010-YWC} for fast approximative AA and compare its best possible performance to that of conventional AA. In order for this note to be as self-contained as possible, we include a glossary of terms and concepts that frequently occur in the context of archetypal analysis.

\section{Archetypal Analysis}

Assume a matrix $\mat{X} = [\vec{x}_1, \vec{x}_2, \ldots, \vec{x}_n] \in \mathbb{R}^{m \times n}$ of data vectors. Archetypal analysis consists in estimating two \textit{column stochastic} matrices  $\mat{B} \in \mathbb{R}^{n \times k}$ and $\mat{A} \in \mathbb{R}^{k \times n}$ such that 
\begin{equation}
\label{eq:AA}
\mat{X} \approx \mat{Z} \mat{A} = \mat{X} \mat{B} \mat{A}
\end{equation}
where the parameter $k$ can be chosen freely by the analyst, but typically $k \ll \min\{m, n\}$. 

The $k$ columns $\vec{z}_j$ of $\mat{Z} \in \mathbb{R}^{m \times k}$ are called the \emph{archetypes} of the data \cite{Cutler1994-AA}. Given \eqref{eq:AA}, we note the following characteristics of archetypal analysis: 

Since $\mat{Z} = \mat{X} \mat{B}$ and $\mat{B}$ is \textit{column stochastic}, we observe that each archetype 
\begin{equation}
\label{eq:bs}
\vec{z}_j = \mat{X} \vec{b}_j
\end{equation}
is a \textit{convex combination} of columns of $\mat{X}$ where the vector of coefficients $\vec{b}_j$ corresponds to the $j$-th column of $\mat{B}$. 

\begin{figure}[t]
\centering
\subfloat[data points $\vec{x}_i \in \mathbb{R}^2$ and their convex hull]{\includegraphics[width=0.32\columnwidth]{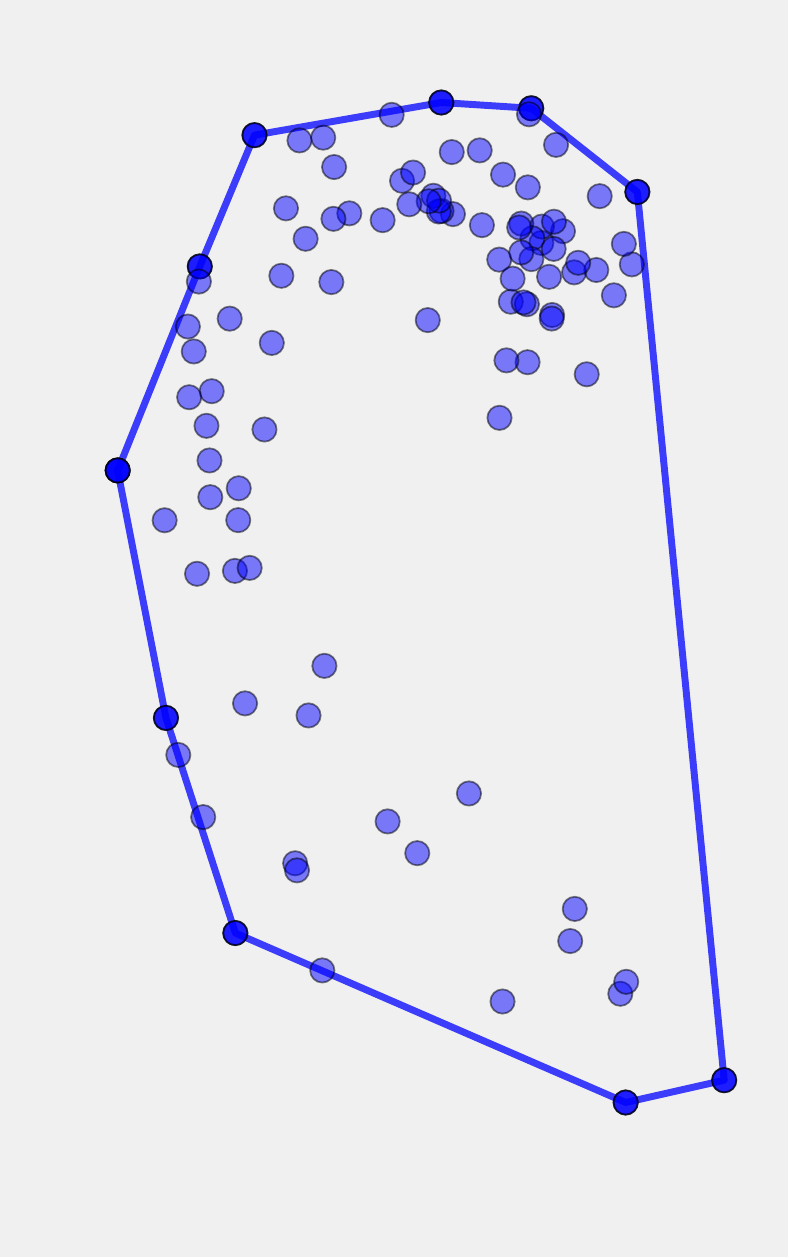}} \hfill
\subfloat[$k=3$ archetypes $\vec{z}_j$ and archetypal hull]{\includegraphics[width=0.32\columnwidth]{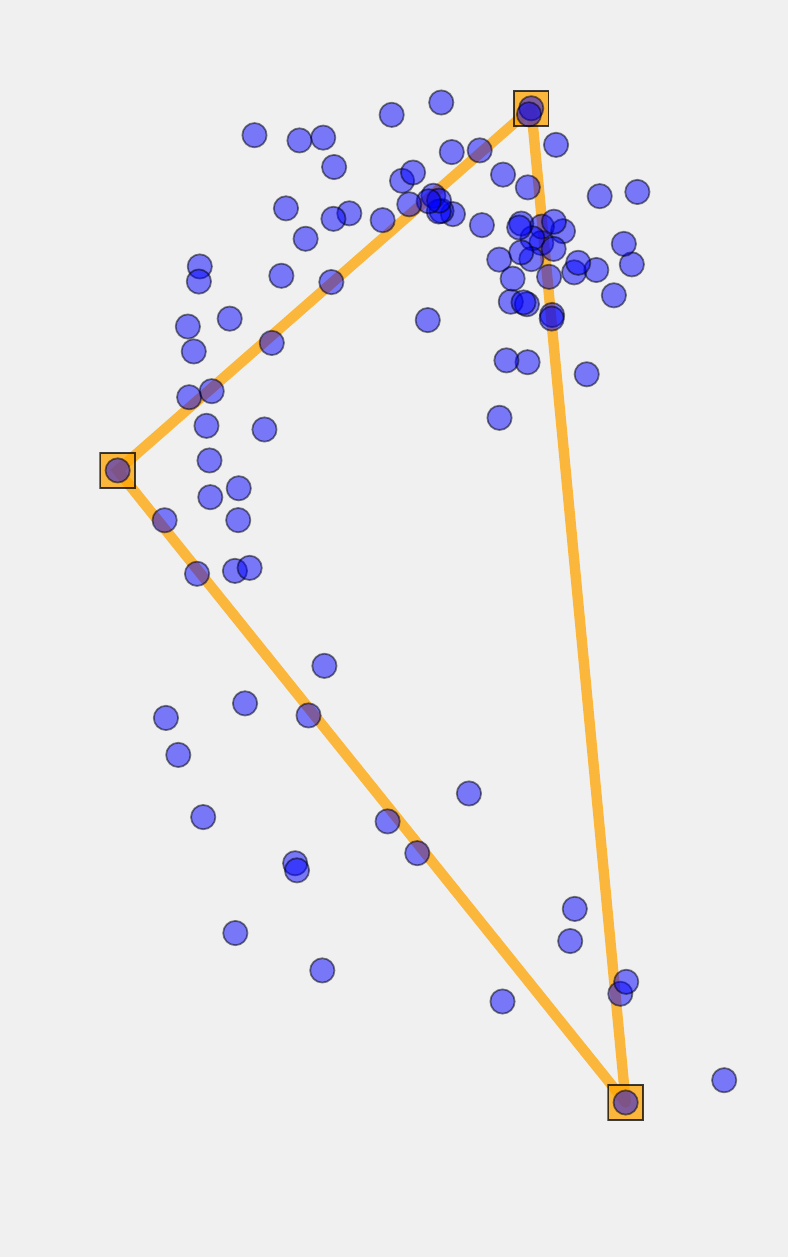}} \hfill
\subfloat[$5=3$ archetypes $\vec{z}_j$ and archetypal hull]{\includegraphics[width=0.32\columnwidth]{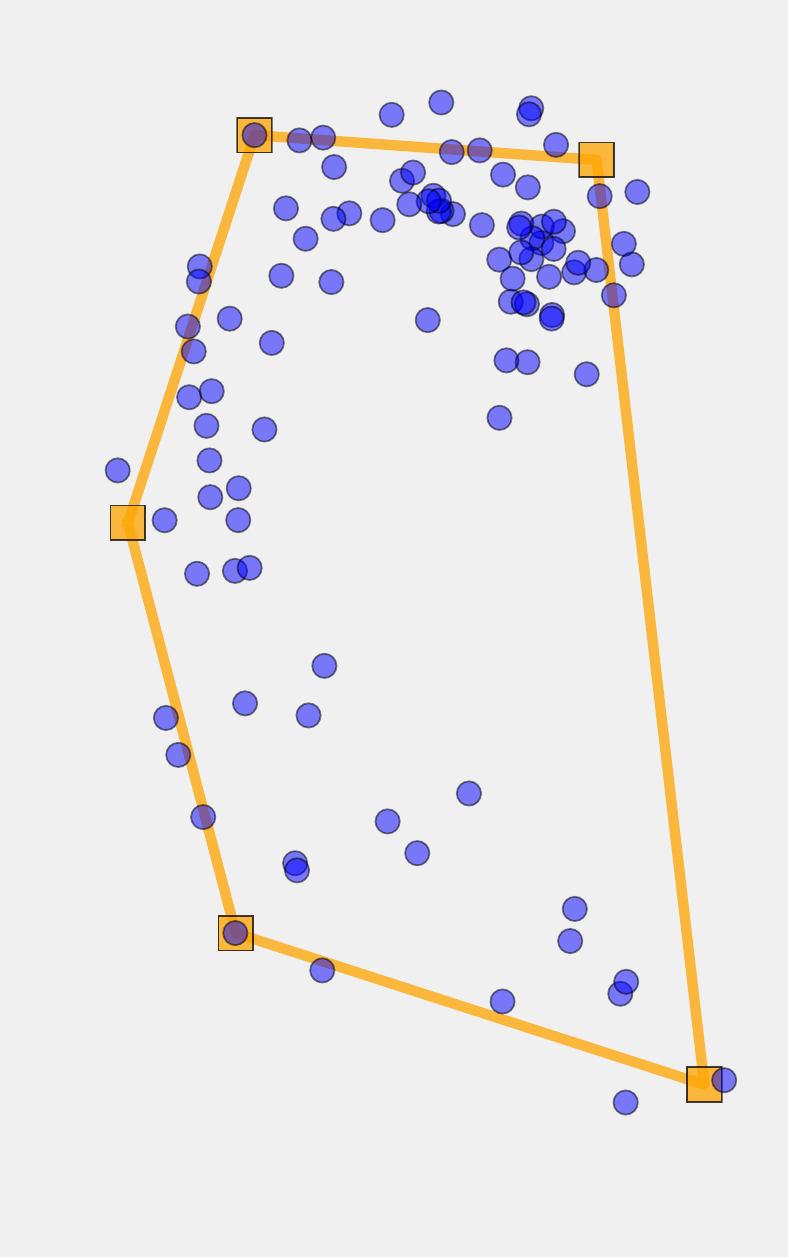}}
 
\subfloat[$7$ archetypes]{\includegraphics[width=0.32\columnwidth]{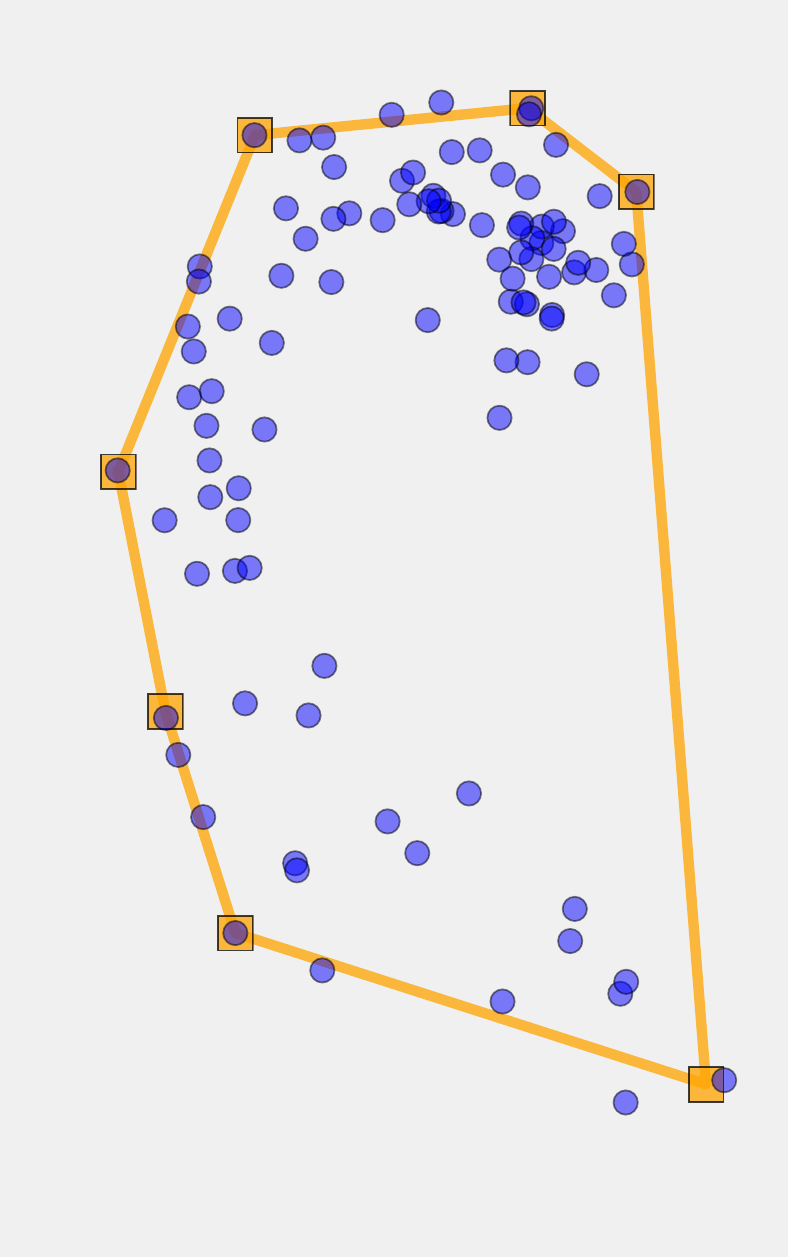}} \hfill
\subfloat[$9$ archetypes]{\includegraphics[width=0.32\columnwidth]{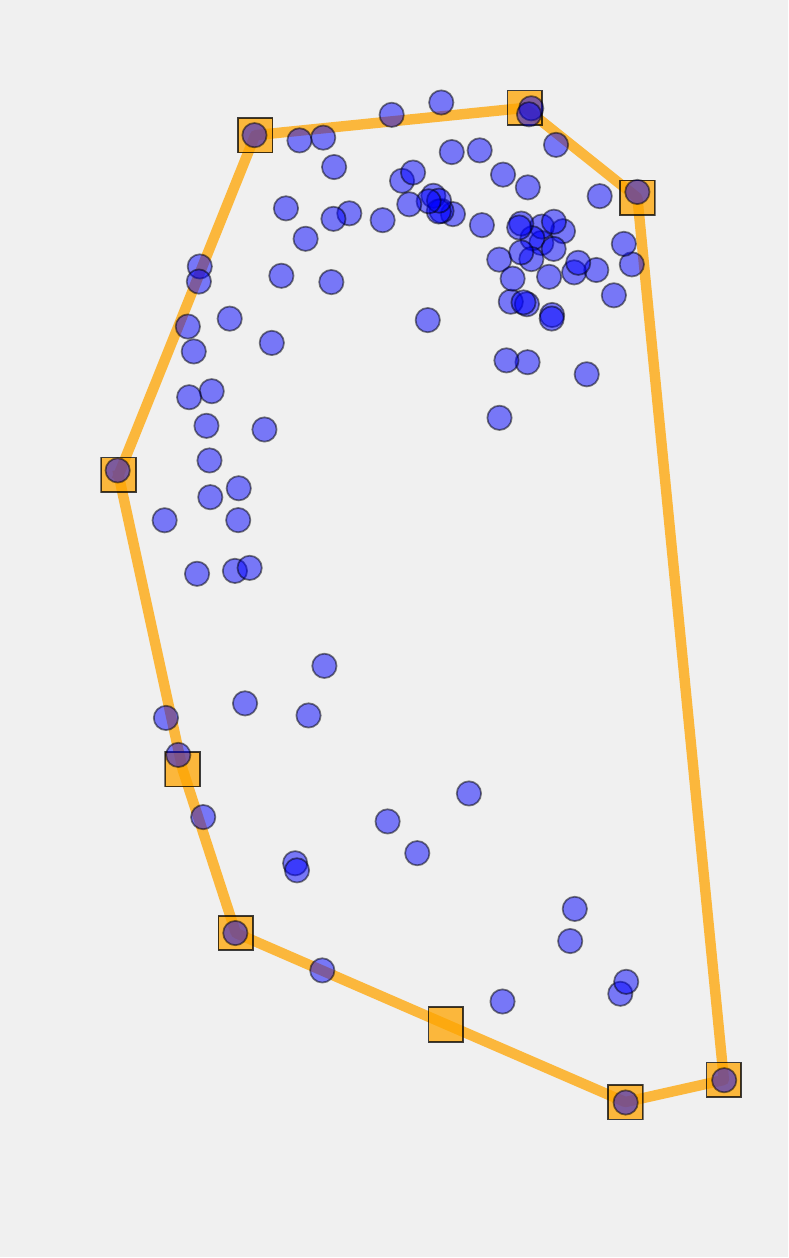}} \hfill 
\subfloat[$10$ archetypes]{\includegraphics[width=0.32\columnwidth]{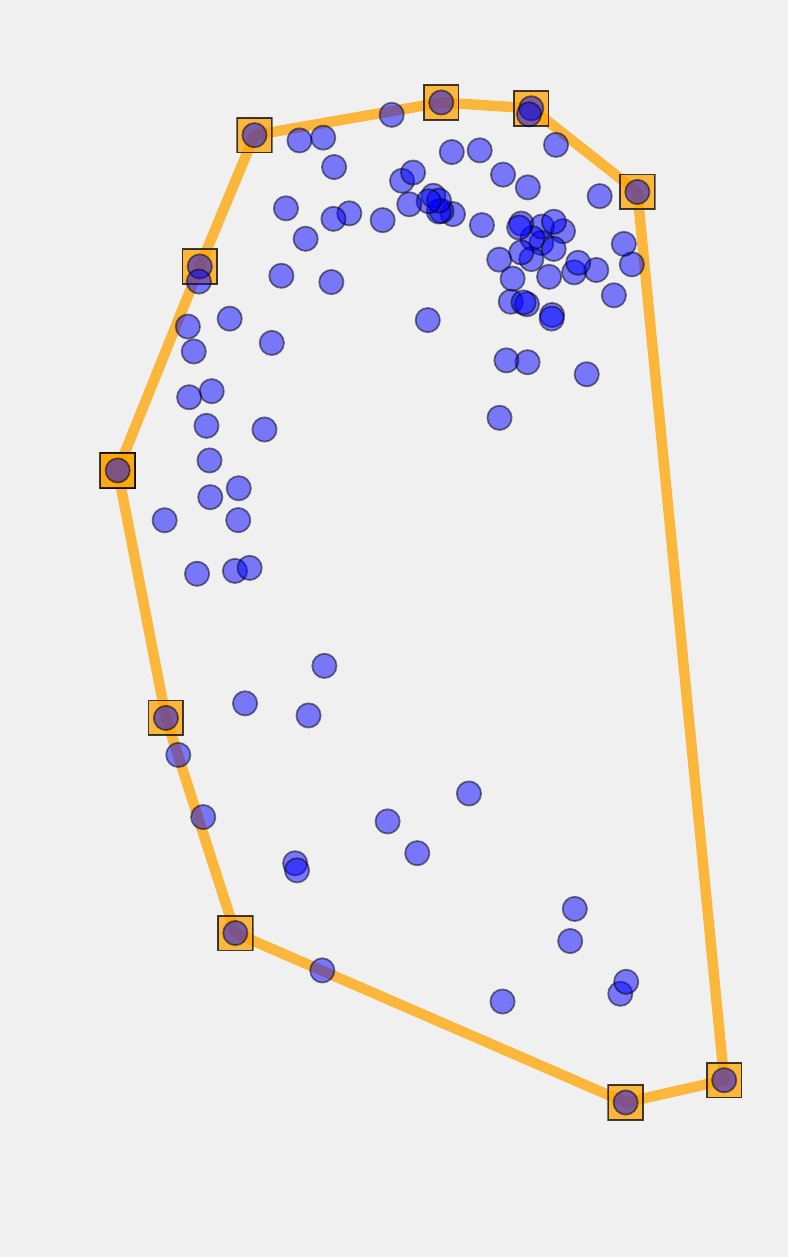}} 
\caption{Archetypal hulls approximate the convex hull of a finite set of data; the more archetypes, the better the approximation.}
\label{fig:hulls}
\end{figure}

Since $\mat{X} \approx \mat{Z} \mat{A}$ and $\mat{A}$ is \textit{column stochastic}, we also see that each data vector
\begin{equation}
\label{eq:as}
\vec{x}_i \approx \mat{Z} \vec{a}_i
\end{equation}
is approximated as a \textit{convex combination} of the columns in $\mat{Z}$ where the vector of coefficients $\vec{a}_i$ corresponds to the $i$-th column of $\mat{A}$. 

Archetypal analysis therefore introduces symmetry into the idea of matrix factorization: \emph{archetypes are convex combinations of data points and data points are approximated in terms of convex combinations of archetypes}.

Results obtained from archetypal analysis are therefore intuitive and physically plausible. In particular, they will be faithful to the nature of the data. For example, if the given data are all non-negative, archetypes determined from archetypal analysis will be non-negative, too. Also, since archetypes are convex combinations of actual data, they closely resemble certain data points and are therefore easily interpretable. 

The problem of computing archetypal analysis can be cast as the following constrained quadratic optimization problem
\begin{align}
\label{eq:E}
\min_{\mat{A}, \mat{B}} \; E = \; &  \bigl \lVert \mat{X} - \mat{X} \mat{B} \mat{A} \bigr \rVert^2 \\
\text{subject to} \quad & \vec{b}_j \succeq \mat{0} \wedge \vec{1}^T \vec{b}_j = 1 \; \forall \; j \in \{1, \ldots, k\}  \notag \\
                        & \vec{a}_i \succeq \mat{0} \wedge \vec{1}^T \vec{a}_i = 1 \; \forall \,\; i \in \{1, \ldots, n\} \notag 
\end{align}
where $\lVert \cdot \rVert$ denotes the matrix Frobenius norm.

The minimization problem in \eqref{eq:E} resembles the objectives of related paradigms and therefore poses difficulties known from other matrix factorization approaches. In particular, we note that, although the objective function $E$ is convex in either $\mat{A}$ or $\mat{B}$, it is anything but convex in the product $\mat{A} \mat{B}$ and may therefore have numerous local minima. Hence, archetypal analysis is a non-trivial problem in general.

Next, we briefly discuss geometric properties of archetypal analysis and how they allow for efficient solutions. However, algorithms that solve \eqref{eq:E} are not our focus in this paper but can be found in \cite{Cutler1994-AA,Bauckhage2009-MAA,Eugster2011-WAR,Morup2012-AAF,Seth2014-PAA}.

\section{The Geometry of AA}

The astute reader may have noticed that archetypal analysis could be solved \textit{perfectly}, if $k=n$. In this case, both factor matrices could be set to the $n \times n$ identity matrix $\mat{I}$ which would comply with the constraints in \eqref{eq:E} and trivially result in $\lVert \mat{X} - \mat{X} \mat{B} \mat{A} \rVert = 0$. 

A more interesting result is that, in addition to the trivial solution, AA can achieve perfect reconstructions even in cases where $k < n$.

Cutler and Breiman \cite{Cutler1994-AA} prove that, for $k > 1$, archetypes necessarily reside on the data convex hull. Moreover, if the hull has $q \leq n$ vertices and we choose $k = q$, these vertices are the unique global minimizers of \eqref{eq:E}. Increasing the number $k$ of archetypes towards the number $q$ of vertices therefore improves the approximation of the convex hull (see Fig.~\ref{fig:hulls}).

These observations allow for a simplification of the problem in \eqref{eq:E}. Assume the vertices of the data convex hull were known and collected in $\mat{V} = [\vec{v}_1, \vec{v}_2, \ldots, \vec{v}_q] \in \mathbb{R}^{m \times q}$. Archetypes could then be determined solely from matrix $\mat{V}$ by solving the appropriately constrained problem
\begin{equation}
\label{eq:V}
\min_{\mat{A}_q, \mat{B}_q} \;  \bigl \lVert \mat{V} - \mat{V} \mat{B}_q \mat{A}_q \bigr \rVert^2 \\
\end{equation}
where $\mat{B}_q$ is of size $q \times k$ and $\mat{A}_q$ of size $k \times q$. Once  archetypes $\mat{Z} = \mat{V} \mat{B}_q$ have been determined,  coefficients $\vec{a}_i$ for those data points not contained in $\mat{V}$ can be computed in a second step. The appeal of this idea lies in the fact that typically $q \ll n$ so that solving \eqref{eq:V} is less demanding than solving \eqref{eq:E}. Moreover, once $\mat{Z}$ is available,  the problem in \eqref{eq:E} needs only to be solved for matrix $\mat{A}$ which requires less effort than estimating $\mat{B}$ and $\mat{A}$ simultaneously \cite{Bauckhage2009-MAA}.

Alas, there is no free lunch! This approach requires to identify the vertices of the data convex hull which is expensive if the data are high dimensional \cite{DeBerg2000-CG}. Nevertheless, there are efficient heuristics for sampling the convex hull of a finite set of data \cite{Bauckhage2009-MAA,Thurau2010-YWC,Kersting2012-MFA,Winter1999-NFA,Chang2006-ANG,Miao2007-EEF} and approximative archetypal analysis can indeed be computed for very large data matrices. 

In this paper, we consider \eqref{eq:V} as a vantage point for reasoning about the quality of archetypal analysis and related heuristics. Accordingly, we henceforth tacitly assume that the $m \times q$ matrix $\mat{V}$ containing the vertices of the data in $\mat{X}$ was available. Moreover, to reduce clutter, we will drop the subscript $q$ of $\mat{B}_q$ and $\mat{A}_q$ and write both matrices as $\mat{B}$ and $\mat{A}$, respectively.

\section{Quality Estimates for AA}

Looking at \eqref{eq:V} we observe that the squared Frobenius norm may be rewritten as 
\begin{equation}
\label{eq:H}
\bigl \lVert \mat{V} - \mat{V} \mat{B} \mat{A} \bigr \rVert^2 
= \bigl \lVert \mat{V} \bigl( \mat{I} - \mat{B} \mat{A} \bigr) \bigr \rVert^2
\end{equation}
where the identity matrix $\mat{I}$ as well as the product matrix $\mat{BA}$ are both of size $q \times q$. However, while $\mat{I}$ is of full rank $q$, the rank of $\mat{BA}$ cannot exceed $k$, because its columns are convex combinations of the $k$ columns of $\mat{B}$. 

Hence, given \eqref{eq:H} and assuming that $k < q$, we can interpret archetypal analysis as the problem of finding a convexity constrained low-rank approximation of $\mat{I}$. In this section, we explore to what extent this insight allows bounding the quality of archetypal analysis and approximative heuristics. In particular, we note that  
\begin{equation}
\label{eq:bound}
\bigl \lVert \mat{V} \bigl(\mat{I} - \mat{BA} \bigr) \bigr \rVert^2 \leq \bigl \lVert \mat{V} \bigr \rVert^2 \, \bigl \lVert \mat{I} - \mat{BA} \bigr \rVert^2
\end{equation}
and ask how large or small can $\bigl \lVert \mat{I} - \mat{BA} \bigr \rVert^2$ become for $k < q$?

\subsection{Two general observations}

To begin with, we show that a perfect low-rank approximation of $\mat{I}$ is impossible. For the proof, we resort to geometric properties of the standard simplex to familiarize ourselves with arguments used later on.

\begin{lemma}
Let $\mat{I}$ be the $q \times q$ identity matrix and $\mat{B} \in \mathbb{R}^{q \times k}$ and $\mat{A} \in \mathbb{R}^{k \times q}$ be two column stochastic matrices with $k < q$. Then $\bigl \lVert \mat{I} - \mat{BA} \bigr \rVert = 0$ cannot be achieved.
\end{lemma}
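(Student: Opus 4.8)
The plan is to show that $\mat{BA}$ cannot equal $\mat{I}$, because the product of two column stochastic matrices is itself column stochastic, whereas $\mat{I}$ forces a degenerate structure that is incompatible with the rank deficiency $k < q$. Since $\lVert \mat{I} - \mat{BA}\rVert = 0$ holds if and only if $\mat{BA} = \mat{I}$, it suffices to rule out $\mat{BA} = \mat{I}$.

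First I would record the elementary closure fact: if $\mat{B}$ and $\mat{A}$ are column stochastic, then so is their product $\mat{BA}$. Indeed, each column of $\mat{BA}$ is a convex combination of the columns of $\mat{B}$, hence nonnegative with entries summing to one. In particular every column $(\mat{BA})_{\cdot i}$ lives in the standard simplex $\Delta^{q-1} = \{\vec{y} \succeq \mat{0} : \vec{1}^T \vec{y} = 1\}$. The point of this paragraph is to set up the geometric picture: achieving $\mat{BA} = \mat{I}$ would require the $i$-th column of $\mat{BA}$ to equal the standard basis vector $\vec{e}_i$, which is precisely a \emph{vertex} of $\Delta^{q-1}$.

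Next I would exploit the rank constraint. Because $\mat{B} \in \mathbb{R}^{q \times k}$ with $k < q$, its $k$ columns $\vec{b}_1, \ldots, \vec{b}_k$ are points of $\Delta^{q-1}$ whose convex hull is a polytope of dimension at most $k-1 < q-1$. Every column of $\mat{BA}$ lies in this hull. The decisive observation is that a vertex $\vec{e}_i$ of the full simplex $\Delta^{q-1}$ is an extreme point, so it can be written as a convex combination of the $\vec{b}_j$ only if one of the $\vec{b}_j$ already equals $\vec{e}_i$. Thus $\mat{BA} = \mat{I}$ would force all $q$ distinct vertices $\vec{e}_1, \ldots, \vec{e}_q$ to appear among the $k < q$ columns of $\mat{B}$, which is impossible by pigeonhole. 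This contradiction establishes the claim.

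The main obstacle is the extreme-point argument in the last step: one must justify carefully that $\vec{e}_i$, being a vertex of the standard simplex, cannot be a nontrivial convex combination of other simplex points, so that representing it forces an exact match $\vec{b}_j = \vec{e}_i$ for some $j$. I expect this to follow cleanly from the extremality of the vertices of $\Delta^{q-1}$ — any convex combination $\sum_j a_{ji} \vec{b}_j = \vec{e}_i$ with $\vec{a}_i$ stochastic puts all weight on those $\vec{b}_j$ that coincide with $\vec{e}_i$. An alternative, purely rank-based route is even shorter and avoids the geometry: $\mathrm{rank}(\mat{BA}) \leq k < q = \mathrm{rank}(\mat{I})$, so $\mat{BA} \neq \mat{I}$ immediately, and hence $\lVert \mat{I} - \mat{BA}\rVert > 0$. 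I would lead with whichever framing best matches the paper's stated preference for ``geometric properties of the standard simplex,'' relegating the one-line rank argument to a remark.
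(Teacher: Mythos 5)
Your argument is correct and follows essentially the same route as the paper: both reduce $\lVert \mat{I} - \mat{BA}\rVert = 0$ to the columnwise condition $\vec{e}_i = \mat{B}\vec{a}_i$, invoke the extremality of the vertices $\vec{e}_i$ of $\Delta^{q-1}$ to force each $\vec{e}_i$ to appear among the columns of $\mat{B}$, and conclude by counting since $k < q$. Your aside that $\operatorname{rank}(\mat{BA}) \leq k < q$ gives a one-line alternative is a nice observation, but the geometric version you lead with is exactly the paper's proof.
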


\begin{proof}
Note that the columns $\vec{b}_j$ of $\mat{B}$ lie the standard simplex $\Delta^{q-1}$ whose vertices correspond to the columns $\vec{e}_i$ of $\mat{I}$. In order for the difference of $\mat{I}$ and $\mat{BA}$ to vanish, their corresponding columns must be equal
\begin{equation}
\label{eq:proof1}
\vec{e}_i = \mat{B} \vec{a}_i = \sum_{j=1}^k \vec{b}_j a_{ji} \;\; \forall \; i \in \{1, \ldots, q\}.
\end{equation}
However, as a vertex of a convex set, $\vec{e}_i$ is \textit{not} a convex combination of other points in that set. Hence, in order for \eqref{eq:proof1} to hold, every $\vec{e}_i$ must be contained in the column set of $\mat{B}$. Yet, while there are $q$ columns in $\mat{I}$, there are only $k < q$ columns in $\mat{B}$.
\end{proof}
 
Having established that a perfect approximation of the unit matrix cannot be attained under the conditions studied here, we ask for the worst case. How bad can it possibly be? 

\begin{lemma}
Let $\mat{I}$, $\mat{B}$, and $\mat{A}$ be given as above. The squared distance between $\mat{I}$ and $\mat{BA}$ has an upper bound of
\begin{equation}
\label{eq:bound1}
\bigl \lVert \mat{I} - \mat{BA} \bigr \rVert^2 \leq 2 q.
\end{equation}
\end{lemma}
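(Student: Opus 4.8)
The plan is to exploit the column-wise decomposition of the squared Frobenius norm together with the observation, already used in the previous lemma, that every column of $\mat{BA}$ lands in the standard simplex $\Delta^{q-1}$. Writing $\vec{e}_i$ for the $i$-th column of $\mat{I}$ and $\vec{a}_i$ for the $i$-th column of $\mat{A}$, I would first record the identity
\begin{equation}
\label{eq:plandecomp}
\bigl\lVert \mat{I} - \mat{BA} \bigr\rVert^2 = \sum_{i=1}^{q} \bigl\lVert \vec{e}_i - \mat{B}\vec{a}_i \bigr\rVert^2,
\end{equation}
which reduces the task to bounding each of the $q$ summands by $2$.

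Next I would argue that both vectors appearing in a summand lie in $\Delta^{q-1}$. Indeed, $\vec{e}_i$ is a vertex of the simplex, and since $\mat{B}$ is column stochastic each column $\vec{b}_j$ lies in $\Delta^{q-1}$; because $\vec{a}_i$ is likewise a stochastic vector, the product $\mat{B}\vec{a}_i = \sum_{j=1}^{k} \vec{b}_j a_{ji}$ is a convex combination of points of $\Delta^{q-1}$ and therefore stays in $\Delta^{q-1}$. Hence each summand is the squared distance between two points of the standard simplex.

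The remaining and only substantive step is to bound the squared diameter of $\Delta^{q-1}$. For arbitrary $\vec{p}, \vec{q} \in \Delta^{q-1}$ I would expand $\lVert \vec{p} - \vec{q} \rVert^2 = \lVert \vec{p} \rVert^2 - 2\,\vec{p}^T\vec{q} + \lVert \vec{q} \rVert^2$ and note that for any stochastic vector $\lVert \vec{p} \rVert^2 = \sum_i p_i^2 \leq \sum_i p_i = 1$, since $0 \leq p_i \leq 1$ forces $p_i^2 \leq p_i$, while $\vec{p}^T \vec{q} = \sum_i p_i q_i \geq 0$ by nonnegativity. This gives $\lVert \vec{p} - \vec{q} \rVert^2 \leq 2$, the bound being tight between distinct vertices. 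Applying this estimate to every summand in \eqref{eq:plandecomp} and summing over the $q$ columns yields $\bigl\lVert \mat{I} - \mat{BA} \bigr\rVert^2 \leq 2q$.

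I do not expect a genuine obstacle here: the argument is elementary once the column-wise reduction is in place. The one point meriting care is the claim that $\mat{B}\vec{a}_i \in \Delta^{q-1}$, which uses the stochasticity of $\mat{A}$ and $\mat{B}$ \emph{simultaneously}; dropping column stochasticity of $\mat{A}$ would let the image escape the simplex and could break the per-column bound of $2$.
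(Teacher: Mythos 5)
Your proof is correct and follows essentially the same route as the paper: decompose the squared Frobenius norm column-wise and bound each term by the squared diameter of $\Delta^{q-1}$. The only difference is that the paper simply asserts that no two points of the simplex are farther apart than two vertices (distance $\sqrt{2}$), whereas you supply the short algebraic verification via $\lVert \vec{p} \rVert^2 \leq 1$ and $\vec{p}^T\vec{q} \geq 0$, which is a nice self-contained touch but not a different argument.
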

\begin{proof}
The columns $\vec{e}_i$ of $\mat{I}$ and the columns $\mat{B} \vec{a}_i$ of $\mat{BA}$ reside in the standard simplex $\Delta^{q-1}$ which is a closed convex set. Any two vertices of $\Delta^{q-1}$ are at a distance of $\sqrt{2}$ and no two points in $\Delta^{q-1}$ can be farther apart than two vertices. Therefore,
\begin{equation}
\bigl \lVert \mat{I} - \mat{BA} \bigr \rVert^2 = \sum_{i=1}^q \lVert \vec{e}_i - \mat{B} \vec{a}_i \rVert^2 \leq q \left(\sqrt{2}\,\right)^2 = 2q.
\end{equation}
\end{proof}

This simple and straightforward result has consequences for archetypal analysis in general. Plugging it into \eqref{eq:bound} gives
\begin{equation}
\bigl \lVert \mat{V} \bigl(\mat{I} - \mat{B} \mat{A} \bigr) \bigr \rVert^2 \leq 2 q \, \bigl \lVert \mat{V} \bigr \rVert^2 
\end{equation}
and we realize that \textit{the number of vertices $q$ of the data convex hull may impact the quality of archetypal analysis}. The more extremes there are, the higher the potential to err. However, the fewer extreme elements a data set contains, the better the chances of archetypal analysis to produce a good solution.

\subsection{Two more specific observations}

Curiously, the upper bound in \eqref{eq:bound1} does not depend on the number $k$ of archetypes to be determined. Our next questions are therefore if it can be tightened and the parameter $k$ be incorporated.

First of all, we note that stochastic rank reduced approximations of the identity matrix can be understood geometrically. Recall that the columns $\mat{B} \vec{a}_i$ of $\mat{BA}$ are convex combinations of the columns $\vec{b}_j$ of $\mat{B}$ which are $q$-dimensional stochastic vectors that reside in $\Delta^{q-1}$. Approximating $\mat{I}$ in terms of $\mat{BA}$ is therefore tantamount to placing $k$ vectors $\vec{b}_j$ into $\Delta^{q-1}$ such that its vertices $\vec{e}_i$ can be approximated in terms of convex combinations over the $\vec{b}_j$.

Also, a good solution to \eqref{eq:V} would mean $\mat{V} \approx \mat{V} \mat{BA}$ so that $\vec{v}_i \approx \mat{V} \mat{B} \vec{a}_i$. If one of the data vertices was reconstructed exactly, that is if $\vec{v}_i = \mat{V} \mat{B} \vec{a}_i$, then $\vec{v}_i = \mat{V} \vec{e}_i$ therefore $\mat{B} \vec{a}_i = \vec{e}_i$. However, $\vec{e}_i$ is a vertex of $\Delta^{q-1}$ and therefore \textit{not} a convex combination of points in $\Delta^{q-1}$. Therefore one of the columns $\vec{b}_j$ of $\mat{B}$ must equal $\vec{e}_i$ and the corresponding coefficient vector  $\vec{a}_i = [0_1, \ldots, 0_{j-1}, 1_j, 0_{j+1}, \ldots, 0_k]^T$

In other words, selecting one of the $\vec{v}_i$ of $\mat{V}$ as an archetype is to place one of the $\vec{b}_j$ of $\mat{B}$ onto a vertex of $\Delta^{q-1}$.

Having established this, we can now analyze the SiVM heuristic for archetypal analysis. SiVM (simplex volume maximization) as introduced in \cite{Thurau2010-YWC} is a greedy search algorithm that determines $k$ data points in $\mat{X}$ that are as far apart as possible and may therefore act as archetypes. In light of our discussion, this heuristic can thus be understood as implicitly placing the $k$ vectors $\vec{b}_j$ onto $k$ of the $q$ vertices of the standard simplex $\Delta^{q-1}$. 

\begin{figure}[t]
\centering
\includegraphics[width=0.6\columnwidth]{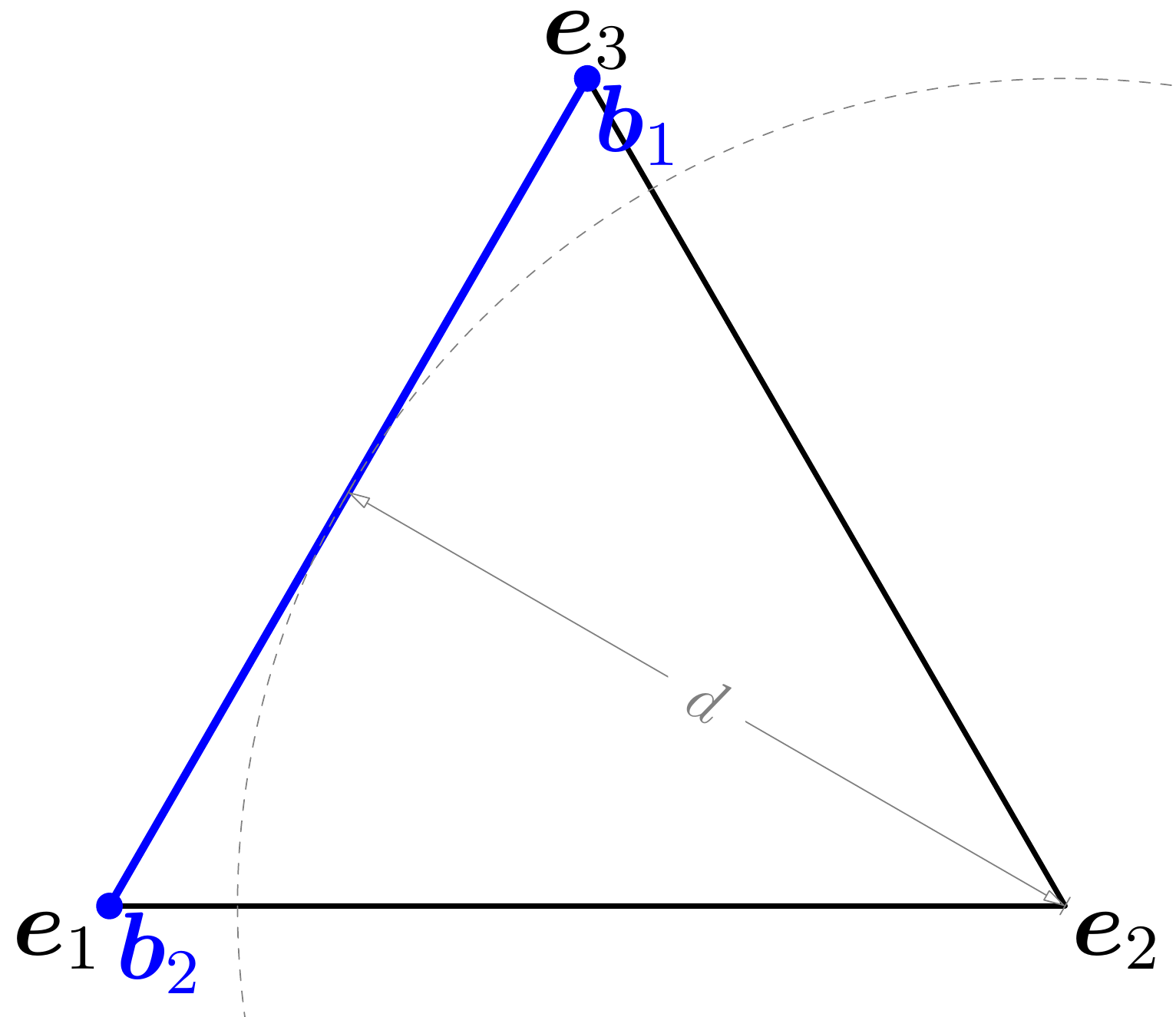}
\caption{\label{fig:height} If two stochastic vectors $\vec{b}_1$ and $\vec{b}_2$ are placed onto two of the vertices of the standard simplex $\Delta^2$, the distance of the third vertex to the sub-simplex spanned by the $\vec{b}_j$ corresponds to the height of $\Delta^2$.}
\end{figure}

Say $\vec{e}_1, \ldots, \vec{e}_k$ had been chosen this way. Then what is the minimal reconstruction error for any of the remaining $q-k$ vertices $\vec{e}_l$? Again, we can argue geometrically. As the selected vertices form a subsimplex $\Delta^{k-1}$ of $\Delta^{q-1}$, any remaining vertex $\vec{e}_l$ would ideally be approximated by its closest point in $\Delta^{k-1}$. The distance $d$ between $\vec{e}_l$ and $\Delta^{k-1}$ corresponds to the height of the $k$ simplex formed by $\{\vec{e}_1, \ldots, \vec{e}_k, \vec{e}_l\}$ (see Fig.~\ref{fig:height}). Given the expression for the height of a standard simplex, we therefore have
\begin{equation}
d = \sqrt{\frac{k+1}{2k}} \cdot \sqrt{2}.
\end{equation}

This then establishes that, running SiVM for $k \leq q$, we can achieve
\begin{align}
\label{eq:bound2}
\bigl \lVert \mat{I} - \mat{BA} \bigr \rVert^2 
& = \underbrace{\sum_{i=1}^k \lVert \vec{e}_i - \mat{B} \vec{a}_i \rVert^2}_{= 0} 
  + \underbrace{\sum_{i=k+1}^q \lVert \vec{e}_i - \mat{B} \vec{a}_i \rVert^2}_{\neq 0} \notag \\
& = (q-k) \cdot d^2  \notag \\
& = (q-k) \cdot \frac{k+1}{k}.
\end{align}

In contrast to the quality estimate in \eqref{eq:bound1}, the one in \eqref{eq:bound2} now depend on $k$. It decreases for growing $k$ and, as expected, would drop to zero, if $k = q$.

As a greedy selection mechanisms, SiVM is fast but suboptimal with respect to reconstruction accuracy. To see this, we next investigate what happens if the $\vec{b}_j$ are not necessarily placed onto vertices $\vec{e}_i$ of the standard simplex.

\begin{figure*}[t]
\centering
\subfloat[$q=3$, $k=2$]{\includegraphics[height=4cm]{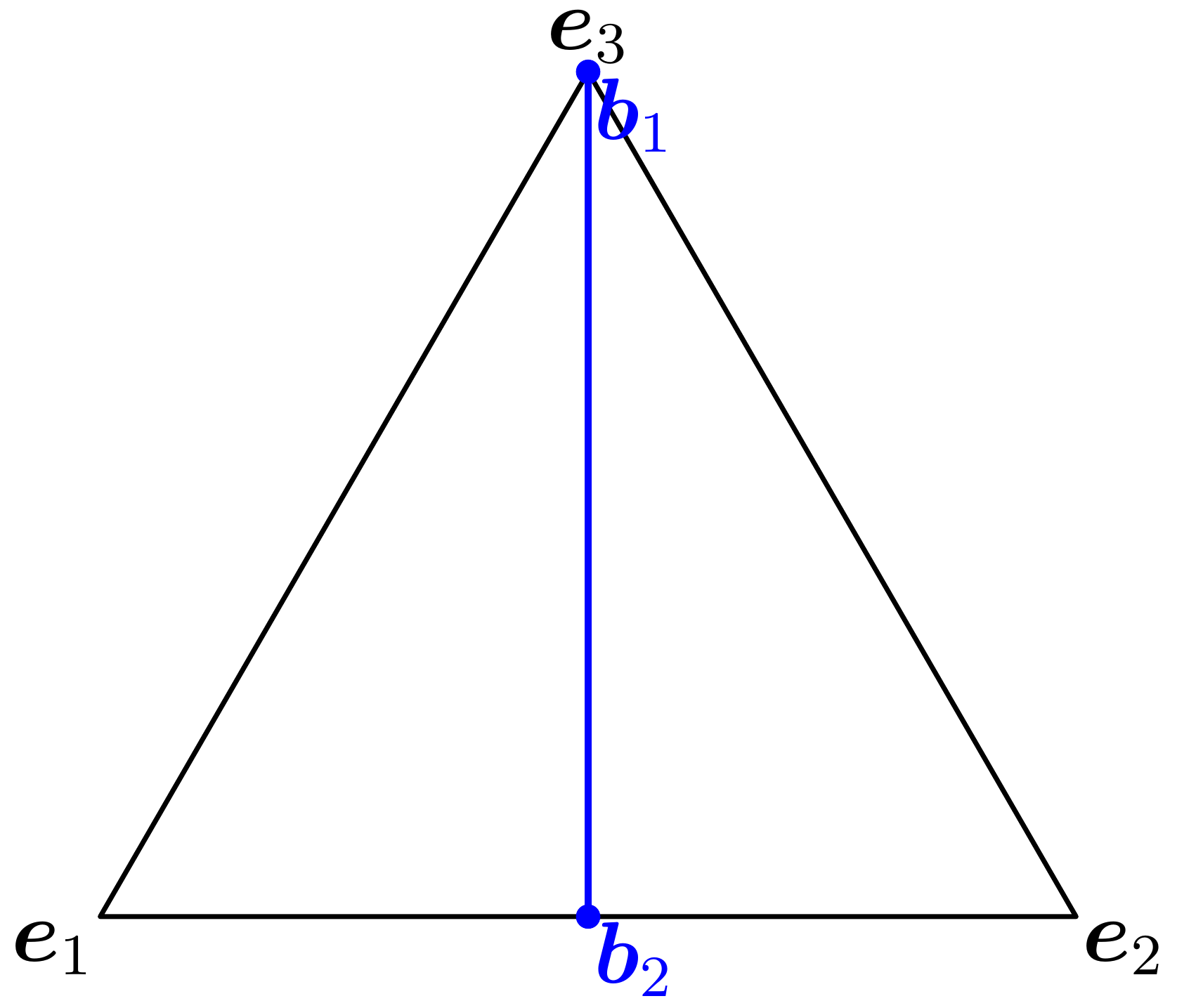}}  \hfill
\subfloat[$q=4$, $k=2$]{\includegraphics[height=4cm]{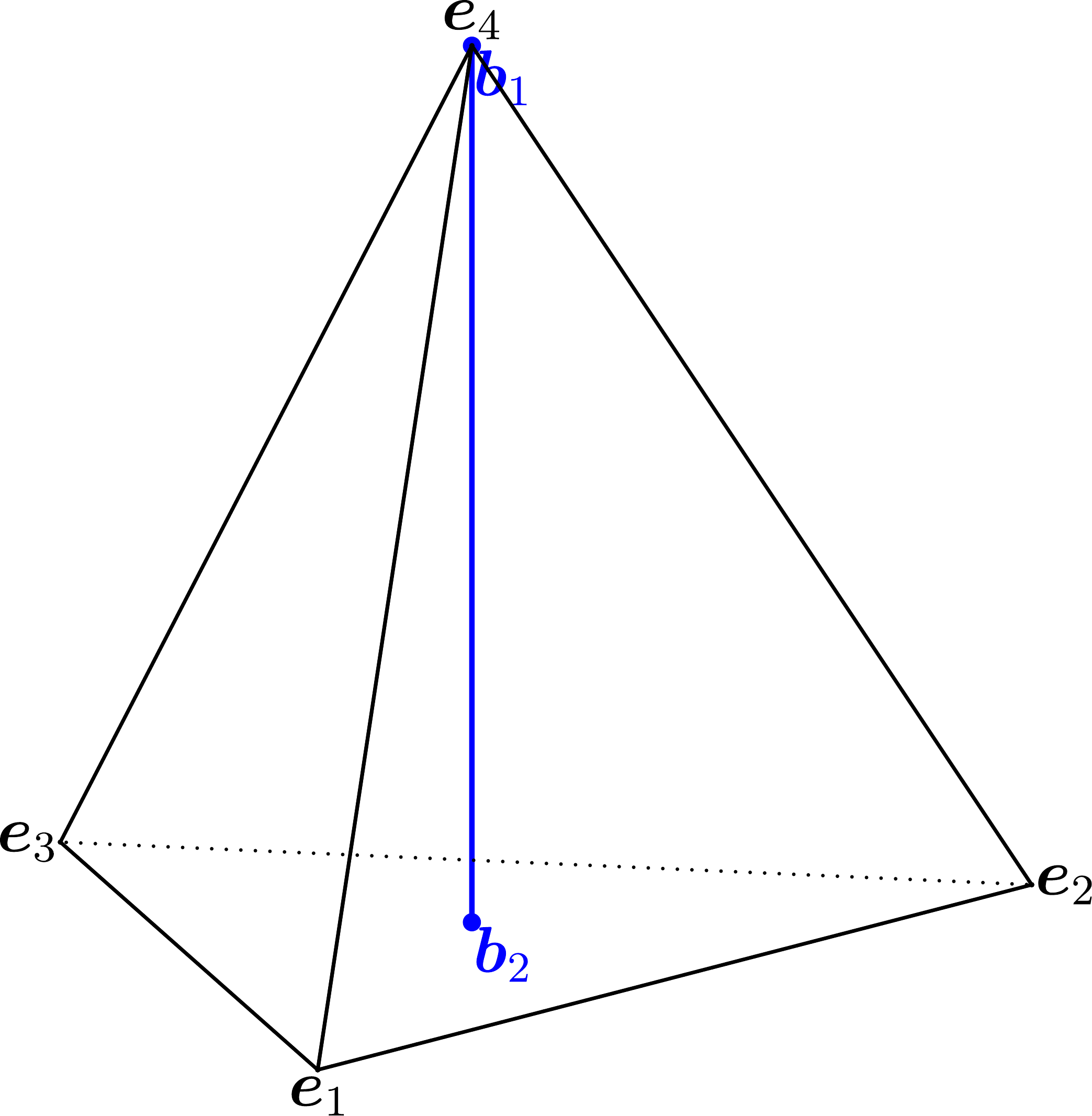}}  \hfill
\subfloat[$q=4$, $k=2$]{\includegraphics[height=4cm]{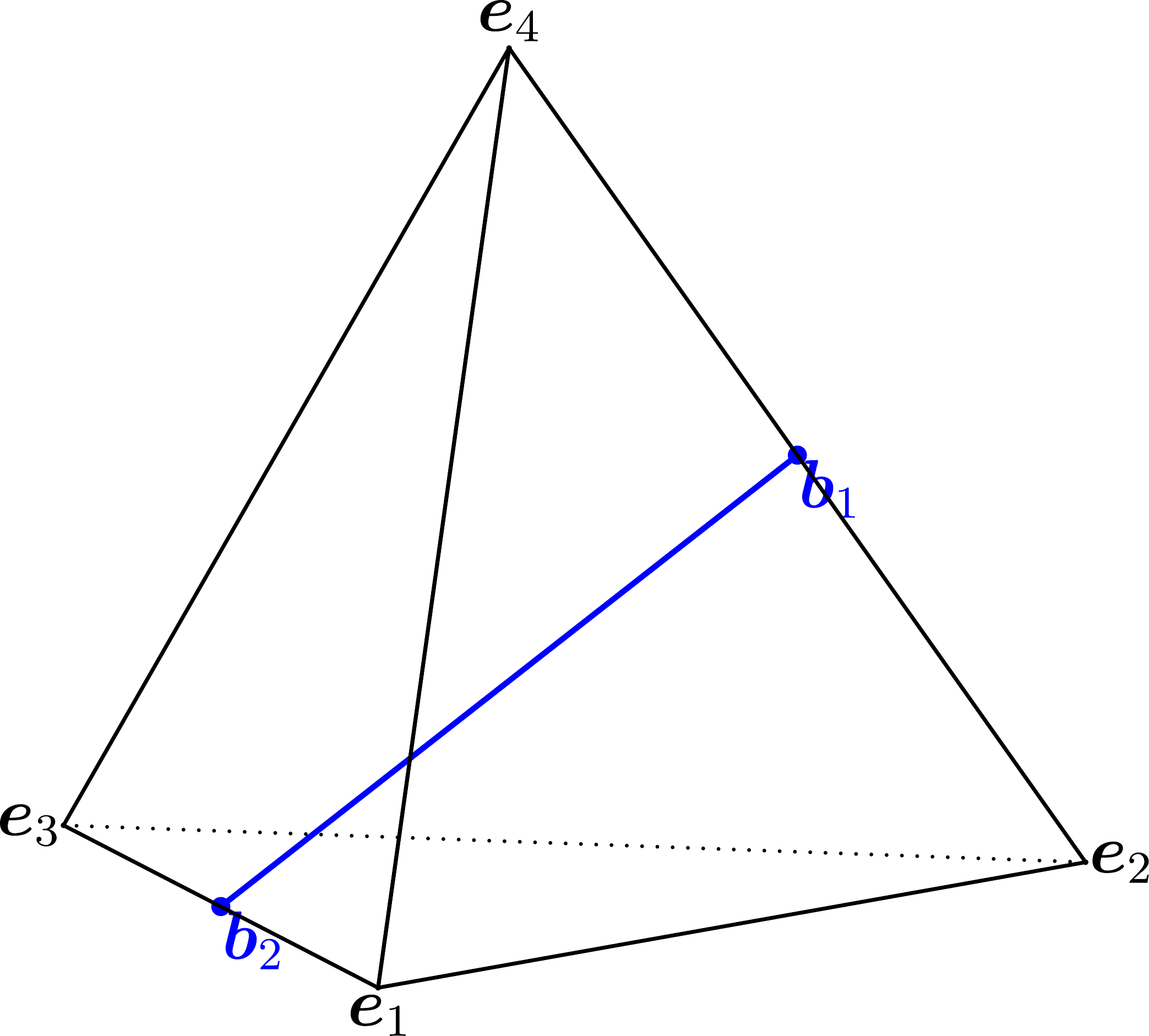}} \hfill
\subfloat[$q=4$, $k=3$]{\includegraphics[height=4cm]{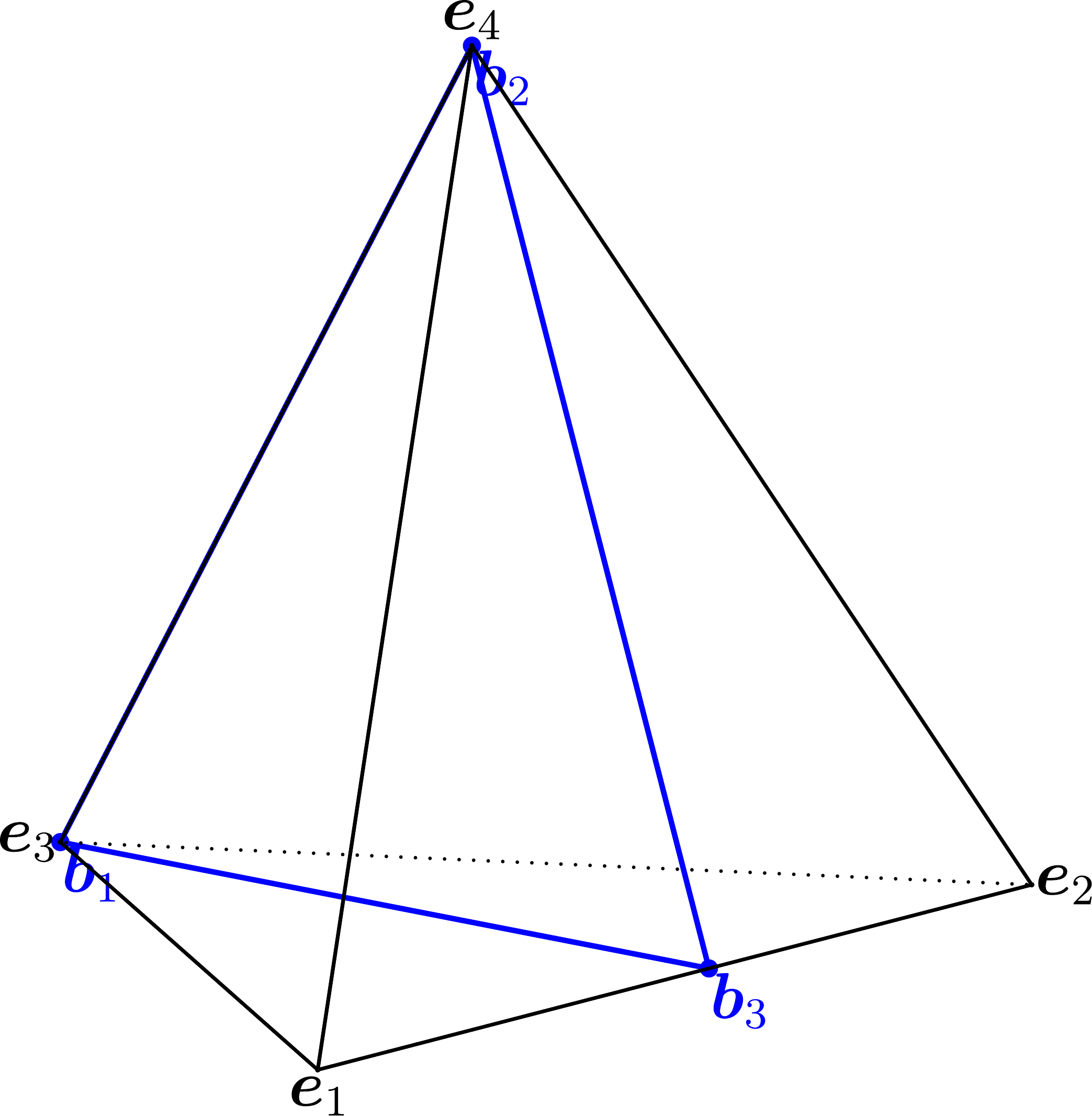}}
\caption{\label{fig:examples} Examples of how to place stochastic vector $\vec{b}_1, \ldots, \vec{b}_k$ into the standard simplex $\Delta^{q-1}$, $k < q$, so that its vertices $\vec{e}_1, \ldots, \vec{e}_q$ can be well approximated in terms of the $\vec{b}_j$. Each of these configurations achieves the accuracy derived in \eqref{eq:bound2}.}
\end{figure*}

We begin by looking at the problem of computing a rank $k=1$ approximation of $\mat{I}$, that is, the problem of placing only a single vector $\vec{b}$ into the standard simplex. Ideally, this vector would have a minimum distance to all the vertices and should therefore minimize
\begin{equation}
\label{eq:b}
E = \sum_{i=1}^q \lVert \vec{e}_i - \vec{b} \rVert^2 = \sum_{i=1}^q \bigl( \vec{e}_i^T \vec{e}_i - 2 \vec{e}_i^T \vec{b} + \vec{b}^T \vec{b} \bigr)
\end{equation}
Deriving this objective function with respect to $\vec{b}$ and equating the result to zero yields
\begin{equation}
2q \vec{b} - 2 \sum_{i=1}^q \vec{e}_i = 0 \; \Leftrightarrow \; \vec{b} = \frac{1}{q} \sum_{i=1}^q \vec{e}_i = \frac{1}{q} \vec{1}.
\end{equation}

Apparently, for $k = 1$, the optimal vector $\vec{b}$ is (of course) the center point of $\Delta^{q-1}$. At the same time, the $k \times q$ matrix $\mat{A}$ of coefficients degenerates to a row vector which, as it must be stochastic, is the vector $\vec{1}^T$. We therefore have
\begin{equation}
\bigl \lVert \mat{I} - \tfrac{1}{q} \vec{1} \vec{1}^T \bigr \rVert^2 = q \cdot 1^2 - q^2 \cdot \left(\frac{1}{q}\right)^2 = q-1.
\end{equation}

Next, we consider the problem of placing $1 < k \leq q$ vectors $\vec{b}_j$ into the standard simplex so as to approximate $\mat{I}$. Given our discussion so far, this problem can be approached in terms of subdividing the standard simplex $\Delta^{q-1}$ into $k$ disjoint sub-simplices and placing one of the $\vec{b}_j$ at the center of each of the resulting simplices. Put in simple terms, if we consider
\begin{equation}
q = \sum_{i=1}^k q_i 
\end{equation}
such that $q_i \in \mathbb{N}$ and $q_i \geq 1$, the identity matrix can indeed be approximated with a reconstruction accuracy of 
\begin{equation}
\label{eq:bound2}
\bigl \lVert \mat{I} - \mat{BA} \bigr \rVert^2 = \sum_{i=1}^k (q_i - 1) = q - k.
\end{equation}

Interestingly, the quality estimate in \eqref{eq:bound2} decreases linearly in $k$. Again, and as expected, it will drop to zero, if $k = q$. The examples in Fig.~\ref{fig:examples} show several configurations for which this reconstruction accuracy is achieved.

Finally, the results in \eqref{eq:bound1} and \eqref{eq:bound2} allow for estimating the relative reconstruction accuracy of the SiVM heuristic in comparison to conventional archetypal analysis. Figure~\ref{fig:quality} plots the ratio
\begin{equation}
\frac{(q-k)}{(q-k) \frac{k+1}{k}} = \frac{k}{k+1}
\end{equation}
as a function of $k$. We observe that, for small choices of $k$, the accuracy of SiVM quickly improves, exceeds $90\%$ for $k > 10$, and then slowly approaches the theoretically optimal performance of conventional AA if $k$ is increased further. 

\begin{figure}
\centering
\includegraphics[width=0.9\columnwidth]{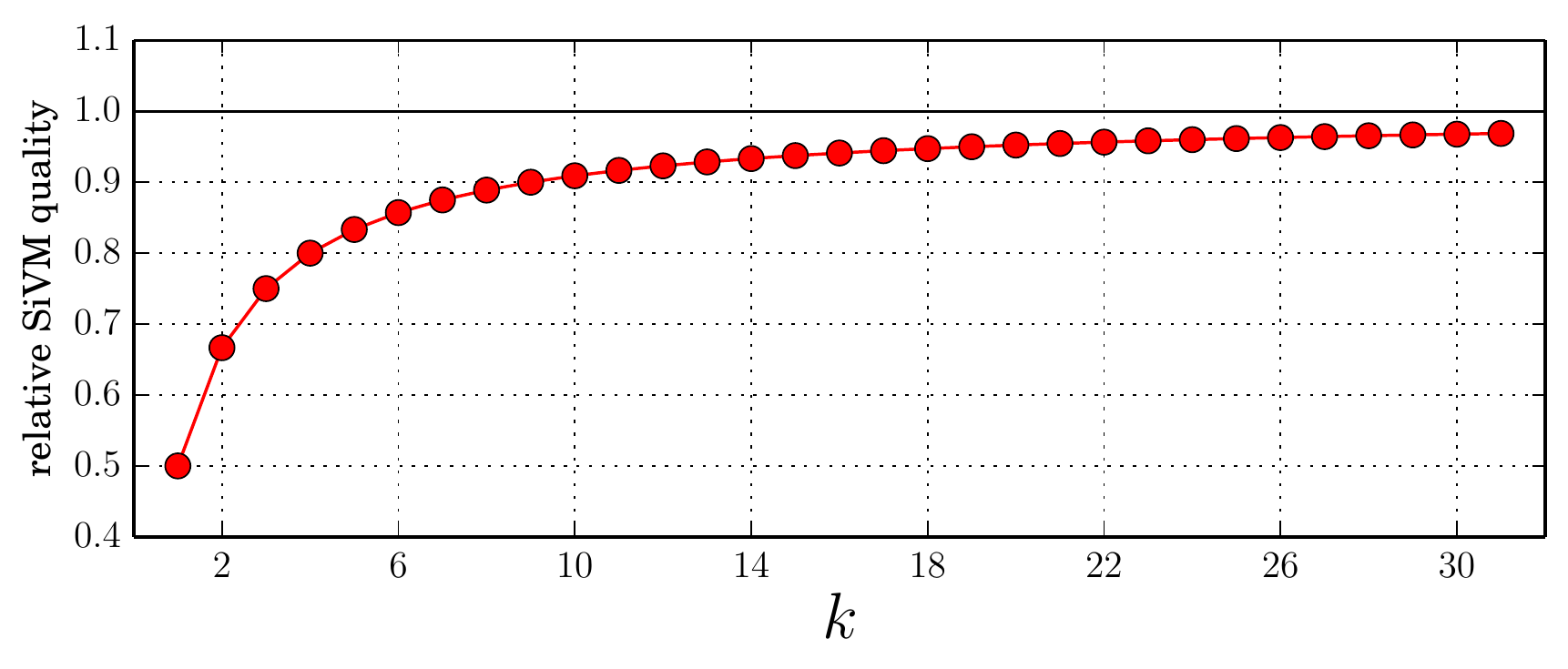}
\caption{\label{fig:quality} Relative reconstruction accuracy of the SiVM heuristic for fast approximative AA. For $k > 10$, can achieve more than $90\%$ of the performance of conventional AA.}
\end{figure}

\section{Summary and Conclusion}

In this note, we briefly reviewed the ideas behind archetypal analysis for data matrix factorization. We discussed geometric properties of solutions found by archetypal analysis and as to how far these allow for efficient heuristics for approximative AA. We then addressed the question of how to estimate the quality of AA algorithms. We showed that the problem of computing archetypal analysis can be reduced to the problem of computing a stochastic low rank approximation of the identity matrix. This point of view allowed us to characterize the optimal performance of SiVM heuristic as well as for conventional AA. Our results indicate that SiVM can provide robust solutions if a larger number of archetypes ($k > 10$) are to be determined.

\section*{Glossary}

Throughout this text, we are concerned with matrices and vectors over the field of real numbers $\mathbb{R}$.

Vectors are written as bold lower case letters ($\vec{v}$) and their components in subscripted lower case italics ($v_k$). $\vec{0}$ is the vector of all zeros and $\vec{1}$ is the vector of all ones.

Matrices are written using bold upper case letters ($\mat{M}$) and doubly subscripted lower case italics ($m_{ij}$) denote individual components. In order to express that a given matrix consists of $n$ column vectors, we also write $\mat{M} = [\vec{m}_1, \vec{m}_2, \ldots, \vec{m}_n]$
where $\vec{m}_j \in \mathbb{R}^m$ is the $j$-th column of $\mat{M} \in \mathbb{R}^{m \times n}$. 

We write $\lVert \mat{M} \rVert$ for the \href{http://en.wikipedia.org/wiki/Matrix_norm#Frobenius_norm}{\emph{Frobenius norm}} of $\mat{M}$ and recall that 
\begin{equation*}
\lVert \mat{M} \rVert^2 = \sum_{i,j} m_{ij}^2 = \operatorname{tr} \bigl( \mat{M}^T \mat{M} \bigr) = \operatorname{tr} \bigl( \mat{M} \mat{M}^T \bigr)
\end{equation*}
as well as
\begin{equation*}
\bigl \lVert \mat{L} \mat{M} \bigr \rVert \leq \bigl \lVert \mat{L} \bigr \rVert \, \bigl \lVert \mat{M} \bigr \rVert.
\end{equation*}

A vector $\vec{v} \in \mathbb{R}^m$ is a \href{http://en.wikipedia.org/wiki/Probability_vector}{\emph{stochastic vector}}, if $v_k \geq 0$ for all $k$ and $\sum_k v_k = 1$. For abbreviation, we write $\vec{v} \succeq \vec{0}$ in order to indicate that $\vec{v}$ is non-negative and use the inner product $\vec{1}^T \vec{v}$ as a convenient shorthand for $\sum_k v_k$.

A matrix $\mat{M} \in \mathbb{R}^{m \times n}$ is \href{http://en.wikipedia.org/wiki/Stochastic_matrix}{\emph{column stochastic}}, if each of its $n$ column vectors $\vec{m}_j$ is stochastic. 

A set $\mathcal{S} \subset \mathbb{R}^m$ is called a \href{http://en.wikipedia.org/wiki/Convex_set}{\emph{convex set}}, if every point on the line segment between any two points in $\mathcal{S}$ is also in $\mathcal{S}$, i.e. if 
\begin{equation*}
\forall \vec{u}, \vec{v} \in \mathcal{S}, \; \forall a \in [0,1]: a \vec{u} + (1-a) \vec{v} \in \mathcal{S}.
\end{equation*}

A vector $\vec{v} \in \mathbb{R}^m$ is called a \href{http://en.wikipedia.org/wiki/Convex_combination}{\emph{convex combination}} of $n$ vectors $\{\vec{v}_1, \vec{v}_2, \ldots, \vec{v}_n\} \subset \mathbb{R}^m$, if 
\begin{equation*}
\vec{v} = \sum_{i=1}^n a_i \vec{v}_i \; \text{ where } \; a_i \geq 0 \; \text{ and } \; \sum_{i=1}^n a_i = 1.
\end{equation*}
Using matrix notation, we write $\vec{v} = \mat{V} \vec{a}$ where the matrix  $\mat{V} = [\vec{v}_1, \vec{v}_2, \ldots, \vec{v}_n] \in \mathbb{R}^{m \times n}$ and  the vector $\vec{a} \in \mathbb{R}^n$ is a stochastic vector.

An \href{http://en.wikipedia.org/wiki/Extreme_point}{\emph{extreme point}} of a convex set $\mathcal{S}$ is any point $\vec{v} \in \mathcal{S}$ that is \textit{not} a convex combination of other points in $\mathcal{S}$. In other words, if $\vec{v}$ is an extreme point and $\vec{v} = a \vec{u} + (1-a) \vec{w}$ for $\vec{u}, \vec{w} \in \mathcal{S}$ and $a \in [0,1]$, then $\vec{v} = \vec{u} = \vec{w}$.

The \href{http://en.wikipedia.org/wiki/Convex_hull}{\emph{convex hull}} $\mathcal{C}(\mathcal{S})$ of a set $\mathcal{S} \subset \mathbb{R}^m$ is the set of all possible convex combinations of points in $\mathcal{S}$, that is
\begin{equation*}
    \mathcal{C}(\mathcal{S}) =
        \biggl \{
        \sum_{\vec{v}_i \in \mathcal{R}} a_i \vec{v}_i 
        \, \biggl| \,
        \mathcal{R} \subseteq \mathcal{S}, \lvert \mathcal{R} \rvert < \infty, \vec{a} \succeq \vec{0}, \vec{1}^T \vec{a} = 1 \biggr \}.
\end{equation*}

A \href{http://en.wikipedia.org/wiki/Convex_polytope}{\emph{convex polytope}} is the convex hull of finitely many points, i.e. it is the set $\mathcal{C}(\mathcal{S})$ for $\lvert \mathcal{S} \rvert < \infty$. The extreme points of a polytope are called \href{http://en.wikipedia.org/wiki/Convex_polytope}{\emph{vertices}}. If $\mathcal{V}(\mathcal{S})$ is the set of all vertices of a polytope, then every point of the polytope can be expressed as a convex combination of the points in $\mathcal{V}(\mathcal{S})$.

\begin{figure}
\centering
\includegraphics[width=0.6\columnwidth]{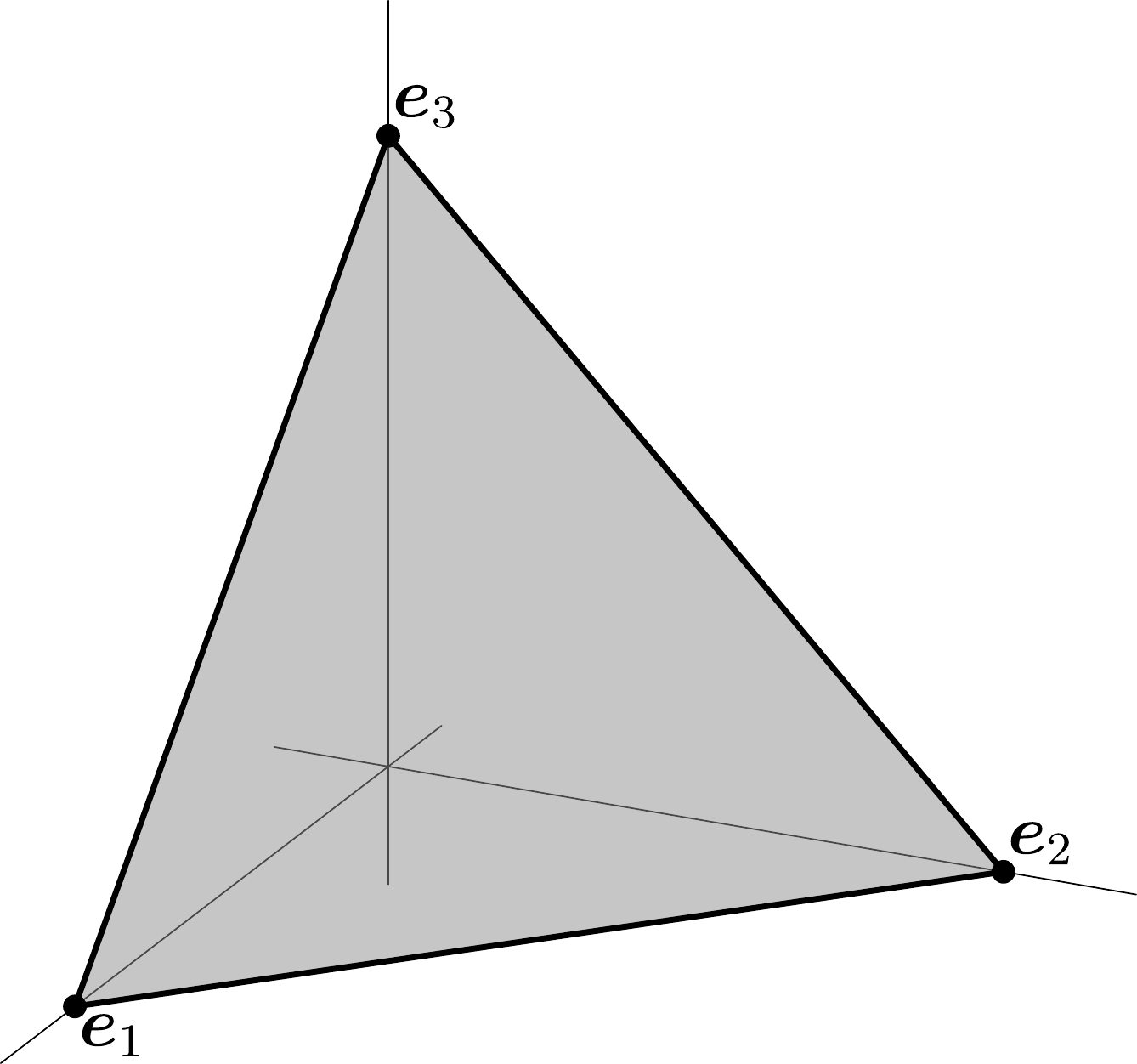}
\caption{\label{fig:2simplex} The standard simplex $\Delta^2$ in $\mathbb{R}^3$.}
\end{figure}

The \href{http://en.wikipedia.org/wiki/Simplex#The_standard_simplex}{\emph{standard simplex}} $\Delta^{m-1}$ is the convex hull of the \href{http://en.wikipedia.org/wiki/Standard_basis}{\emph{standard basis}} $\{ \vec{e}_1, \vec{e}_2, \ldots, \vec{e}_m\} \subset \mathbb{R}^m$, that is
\begin{equation*}
\Delta^{m-1} = \biggl \{ \sum_{i = 1}^m a_i \vec{e}_i \, \biggl| \, \vec{a} \in \mathbb{R}^m, \vec{a} \succeq \vec{0}, \vec{1}^T \vec{a} = 1 \biggr \}.
\end{equation*}
We note that the standard simplex is a polytope whose vertices correspond to the standard basis vectors (see Fig.~\ref{fig:2simplex}) and that every stochastic vector $\vec{v} \in \mathbb{R}^m$ resides within $\Delta^{m-1}$.

There are many useful results about the standard simplex $\Delta^{m-1}$. Among others, its side length is $\sqrt{2}$ and its height is known to be
\begin{equation}
h_{m-1} = \sqrt{\frac{m}{2 (m-1)}} \cdot \sqrt{2}.
\end{equation}

\bibliographystyle{IEEEtran}
\bibliography{literatureMatFact}

\end{document}